\newtheorem{thm}{Theorem}
\newtheorem{cor}[thm]{Corollary}
\newtheorem{lem}[thm]{Lemma}
\newtheorem{prop}[thm]{Proposition}
\theoremstyle{definition}
\theoremstyle{remark}
\newcommand{\norm}[1]{\left\Vert#1\right\Vert}
\newcommand{\abs}[1]{\left\vert#1\right\vert}
\newcommand{\R}{\mathbf R}
\newcommand{\C}{\mathbf C}
\newcommand{\eps}{\varepsilon}
\begin{document}

\title[A Note on Homogeneous Functions and Parallelogram Law]{A Note on Homogeneous Functions and Parallelogram Law}%
\author{Wenhan Wang}%
\address{}%
\email{wangwh@math.washington.edu}%
\author{Wen Wang}%
\address{}%
\email{wwang@math.wsu.edu}%

\thanks{}%
\subjclass{}%
\keywords{}%

\begin{abstract}
In this article, we investigate a new characterization of the parallelogram law in a normed linear space. We give equivalent conditions to the parallelogram law, in terms of the homogeneous property of a continuous positive definite function on the normed space.
\end{abstract}
\maketitle

\

Let $K=\R$ or $\C$ be the base field, and suppose $X$ be a normed $K$-linear space, with norm $\norm{\cdot}$. A continuous function $p:X\to\R$ is called \emph{positive definite} if $p(x)\geq 0$ for all $x\in X$, and $p(x)=0$ if and only if $x=0\in X$. Moreover, $p:X\to\R$ is called homogeneous, if for any $\lambda\in K$, $p(\lambda x)=\abs{\lambda}p(x)$. We first note the following lemma on the comparison of two positive definite homogeneous continuous functions on $X$.

\

\begin{lem}
Let $p:X\to\R$ be a continuous homogeneous function on $X$. If $p$ is bounded, then $p\equiv 0$ on $X$.
\end{lem}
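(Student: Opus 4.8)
The plan is to argue by contradiction using the scaling behaviour of $p$ directly; continuity will in fact play no role in the argument. Suppose $p$ is bounded, say $p(x)\le M$ for all $x\in X$ with some finite constant $M\ge 0$ (since $p\ge 0$ by hypothesis, an upper bound is the only kind that matters). Assume, for contradiction, that $p\not\equiv 0$, so that there exists $x_0\in X$ with $p(x_0)=c>0$.

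Now I would exploit homogeneity: for every scalar $\lambda\in K$ we have $p(\lambda x_0)=\abs{\lambda}\,p(x_0)=\abs{\lambda}\,c$. Choosing $\lambda=n$ for a positive integer $n$ — which lies in $K$ whether $K=\R$ or $K=\C$ — gives $p(n x_0)=nc$, and this tends to $+\infty$ as $n\to\infty$, contradicting the bound $p(n x_0)\le M$. Hence no such $x_0$ exists, i.e.\ $p(x)=0$ for every $x\in X$; the value $p(0)=0$ is in any case forced directly by taking $\lambda=0$ in the homogeneity relation. Therefore $p\equiv 0$, as claimed.

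I expect no genuine obstacle here: the only points requiring a moment's care are the direction of the bound (because $p$ takes nonnegative values, "bounded" means bounded above) and the trivial remark that the positive integers sit inside the scalar field in both the real and complex cases. One could alternatively phrase the proof via the unit sphere — by hypothesis $p$ is bounded on $\set{x\in X:\norm{x}=1}$, and homogeneity reconstructs $p$ on all of $X$ from its values there, so unboundedness off the sphere is immediate unless $p$ vanishes on it — but the single scaling estimate above is the most economical route.
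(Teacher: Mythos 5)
Your argument is essentially the paper's own proof run in the opposite direction: the paper writes $\abs{p(x)}=\eps\cdot\abs{p(x/\eps)}\leq\eps M$ and lets $\eps\to 0$, while you fix $x_0$ and scale it up by $n\to\infty$; these are the same one-line homogeneity computation, and you are right that continuity plays no role in either version. There is, however, one misstep worth flagging: you assert that $p\geq 0$ ``by hypothesis.'' The lemma assumes only that $p$ is continuous and homogeneous --- positive definiteness is \emph{not} among its hypotheses --- and this is not a cosmetic point, because the corollary immediately following applies the lemma to $p=p_1-p_2$, a difference of homogeneous functions that has no reason to be nonnegative. As written, your contradiction hypothesis (``there exists $x_0$ with $p(x_0)=c>0$'') does not exhaust the ways $p$ can fail to vanish identically: it misses the case where $p$ takes a strictly negative value somewhere but never a positive one. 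The repair is a one-liner --- take $x_0$ with $p(x_0)=c\neq 0$ and observe that $\abs{p(nx_0)}=n\abs{c}\to\infty$, contradicting $\abs{p}\leq M$ --- but you should state the bound as $\abs{p(x)}\leq M$ from the outset rather than as a one-sided bound justified by an assumption the lemma does not make.
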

\begin{proof}
Since $p$ is bounded, we may assume that the $\abs{p(x)}\leq M$ for all $x\in X$. It then follows that for any $x\in X$ and any $\eps>0$,
$$\abs{p(x)}=\eps\cdot\abs{p\left(\frac{x}{\eps}\right)}\leq\eps\cdot M.$$
Since $\eps>0$ is arbitrary, it is necessary that $\abs{p(x)}=0$. Hence $p(x)=0$.
\end{proof}

\

We then obtain immediately the following corollary on the comparison of two homogeneous functions $p_1(x)$ and $p_2(x)$.

\

\begin{cor}
Let $p_1$ and $p_2$ be two homogeneous functions on $X$, such that $p_1-p_2$ is bounded on $X$. Then $p_1=p_2$.
\end{cor}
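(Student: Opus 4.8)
The plan is to reduce the statement to the preceding Lemma by introducing $p := p_1 - p_2$. First I would verify that $p$ is again homogeneous: for any $\lambda \in K$ and $x \in X$,
$$p(\lambda x) = p_1(\lambda x) - p_2(\lambda x) = \abs{\lambda}\, p_1(x) - \abs{\lambda}\, p_2(x) = \abs{\lambda}\, p(x),$$
so the class of homogeneous functions is closed under differences. By hypothesis $p$ is bounded on $X$, say $\abs{p(x)} \le M$ for all $x$.

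The one point worth flagging is that the Lemma is stated for a \emph{continuous} homogeneous function, whereas the corollary makes no continuity assumption on $p_1$ or $p_2$, hence none on $p$. Inspecting the proof of the Lemma, however, one sees that continuity is never actually used: the argument rests solely on homogeneity and boundedness, through the identity $\abs{p(x)} = \eps\,\abs{p(x/\eps)} \le \eps M$ valid for every $\eps > 0$. I would therefore either note that the Lemma holds verbatim without the continuity hypothesis, or simply re-run its one-line scaling argument for $p$ directly: letting $\eps \to 0^{+}$ in $\abs{p(x)} \le \eps M$ forces $p(x) = 0$.

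Hence $p \equiv 0$ on $X$, i.e.\ $p_1(x) = p_2(x)$ for all $x \in X$, which is the claim. There is no genuine obstacle; the only subtlety is the harmless mismatch between the continuity hypothesis appearing in the Lemma and what its proof really needs, which is why I would carry out the scaling estimate explicitly for $p$ rather than quote the Lemma as a black box.
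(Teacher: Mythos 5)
Your proof is correct and takes essentially the same route as the paper, whose entire proof is ``Apply the above Lemma to $p=p_1-p_2$.'' Your additional observation---that the Corollary does not assume continuity of $p_1$ or $p_2$ while the Lemma nominally requires it, but that the Lemma's scaling argument never uses continuity---is a legitimate point the paper glosses over, and your explicit re-run of the estimate closes that small gap cleanly.
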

\begin{proof}
Apply the above Lemma to $p=p_1-p_2$.
\end{proof}

\

We then introduce and prove the main Theorem on characterization of the parallelogram law in terms of homogeneous functions. For any $y\in X$ with $\norm{y}=1$, we define a function
\begin{equation}
p_y(x):=\sqrt{\frac{1}{2}\left(\norm{x+y}^2+\norm{x-y}^2\right) - 1}.
\end{equation}
Note that $p_y:X\to\R$ is well-defined. To see this, note that
$$\norm{x+y}^2+\norm{x-y}^2\geq \frac{1}{2}\left(\norm{x+y}+\norm{x-y}\right)^2
\geq\frac{1}{2}\norm{2y}^2=2,$$
hence $\frac{1}{2}\left(\norm{x+y}^2+\norm{x-y}^2\right) - 1\geq 0$, and $p_y$ is well-defined from $X$ to $\R$. Moreover, following a closer look at the above arguments, we have

\

\begin{prop}
For any $y\in X$, $p_y$ is a continuous positive definite function.
\end{prop}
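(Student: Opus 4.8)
The plan is to verify separately the three defining properties of a continuous positive definite function: continuity, nonnegativity, and vanishing exactly at the origin. Throughout I use $\norm{y}=1$, as required for $p_y$ to be defined.

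\emph{Continuity.} The map $x\mapsto\frac{1}{2}\left(\norm{x+y}^2+\norm{x-y}^2\right)-1$ is continuous on $X$, being assembled from the continuous norm by translation, squaring, addition, and subtraction of a constant; by the computation preceding the statement its values lie in $[0,\infty)$, where $t\mapsto\sqrt{t}$ is continuous. Hence $p_y$, the composition of the former map with $t\mapsto\sqrt{t}$, is continuous.

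\emph{Nonnegativity and the value at $0$.} That $p_y(x)\geq 0$ for every $x\in X$ is precisely the inequality already used to show $p_y$ is well defined, so nothing new is needed there. A direct substitution gives $p_y(0)=\sqrt{\frac{1}{2}\left(\norm{y}^2+\norm{-y}^2\right)-1}=\sqrt{\frac{1}{2}(1+1)-1}=0$. It therefore remains to prove the converse implication, $p_y(x)=0\Rightarrow x=0$.

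\emph{The converse.} The idea is to revisit the two inequalities behind the well-definedness of $p_y$ and read off their equality cases. Set $a=\norm{x+y}$ and $b=\norm{x-y}$. The first inequality, $a^2+b^2\geq\frac{1}{2}(a+b)^2$, rearranges to $\frac{1}{2}(a-b)^2\geq 0$, an equality precisely when $a=b$. The second, $\frac{1}{2}(a+b)^2\geq\frac{1}{2}\norm{2y}^2=2$, comes from the triangle inequality $a+b=\norm{x+y}+\norm{x-y}\geq\norm{(x+y)-(x-y)}=\norm{2y}=2$, an equality precisely when $a+b=2$. If $p_y(x)=0$ then $\frac{1}{2}(a^2+b^2)=1$, which forces equality in both steps, so $a=b=1$ and $\norm{(x+y)-(x-y)}=\norm{x+y}+\norm{x-y}$.

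The remaining step — passing from $\norm{x+y}=\norm{x-y}=1$ together with the triangle-inequality equality $\norm{(x+y)-(x-y)}=\norm{x+y}+\norm{x-y}$ to the conclusion $x=0$ — is the crux, and it is the step I expect to be the main obstacle. The natural route is to apply the equality case of the triangle inequality to $u=x+y$ and $v=y-x$: one would like $\norm{u}+\norm{v}=\norm{u+v}$ to force $u$ and $v$ to be proportional with a nonnegative factor, after which $\norm{u}=\norm{v}=1$ gives $u=v$, i.e.\ $x=0$. Justifying this proportionality is the delicate point, since in an arbitrary normed space equality in the triangle inequality does not by itself yield it; this is exactly where the argument must use the finer geometry of the unit ball of $X$, and the step on which I would concentrate the effort.
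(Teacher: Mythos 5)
Your treatment of continuity and of $p_y\geq 0$ is correct, and it is essentially all the paper itself offers (the Proposition is justified only by pointing back to the well-definedness computation, which gives nonnegativity and nothing more). The real content of your write-up is that you have correctly isolated the one step that matters: deducing $x=0$ from $p_y(x)=0$. Your equality-case analysis is right --- $p_y(x)=0$ forces $\norm{x+y}=\norm{x-y}=1$ together with $\norm{(x+y)+(y-x)}=\norm{x+y}+\norm{y-x}$ --- but the hope you express at the end, that ``the finer geometry of the unit ball of $X$'' will convert this into $x=0$, cannot be realized: $X$ is an arbitrary normed space and has no such finer geometry to offer. The implication is in fact false, and with it the Proposition. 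Take $X=\R^2$ with the supremum norm, $y=(1,0)$ and $x=(0,\tfrac12)$. Then $\norm{x+y}_\infty=\norm{(1,\tfrac12)}_\infty=1$ and $\norm{x-y}_\infty=\norm{(-1,\tfrac12)}_\infty=1$, so $p_y(x)=\sqrt{\tfrac12(1+1)-1}=0$ while $x\neq 0$.

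So the gap you flagged is not a technical point you failed to fill in; it is a genuine counterexample to the statement. Equality in the triangle inequality forces the proportionality you want only in strictly convex spaces, and indeed your argument does go through under that extra hypothesis: if the unit ball is strictly convex, then $\norm{u}=\norm{v}=1$ and $\norm{u+v}=2$ force $u=v$, i.e.\ $x=0$. Under condition (iii) of the main Theorem the space is an inner product space, hence strictly convex, so the Theorem itself is not endangered; but the Proposition as stated for a general normed space is wrong, and the correct conclusion of your analysis is that either strict convexity must be added as a hypothesis or ``positive definite'' must be weakened to ``nonnegative and vanishing at $0$.''
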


\

Before we introduce the main Theorem, we shall compare $p_y(x)$ with the norm $\norm{x}$.

\

\begin{lem}
For any $x\in X$, with $\norm{x}\geq 1$, we have $\norm{x}-1\leq p_y(x)\leq\norm{x}+1$.
\end{lem}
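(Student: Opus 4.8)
The plan is to estimate the quantity $\tfrac{1}{2}\left(\norm{x+y}^2+\norm{x-y}^2\right)$ from both sides by elementary triangle-inequality arguments, subtract $1$, and then take square roots, keeping careful track of where the hypothesis $\norm{x}\geq 1$ is actually used.

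First I would dispose of the upper bound. By the triangle inequality, $\norm{x\pm y}\leq\norm{x}+\norm{y}=\norm{x}+1$ for both choices of sign, so
$$\tfrac{1}{2}\left(\norm{x+y}^2+\norm{x-y}^2\right)-1\leq(\norm{x}+1)^2-1\leq(\norm{x}+1)^2,$$
and taking square roots gives $p_y(x)\leq\norm{x}+1$. Note that this half of the statement does not use $\norm{x}\geq 1$.

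For the lower bound I would run the same kind of chain of inequalities that was used above to show that $p_y$ is well-defined, but with $x$ in place of $y$. Writing $2x=(x+y)+(x-y)$ and applying the triangle inequality yields $2\norm{x}\leq\norm{x+y}+\norm{x-y}$, and combining this with the inequality $a^2+b^2\geq\tfrac{1}{2}(a+b)^2$ gives $\norm{x+y}^2+\norm{x-y}^2\geq 2\norm{x}^2$. Hence $\tfrac{1}{2}\left(\norm{x+y}^2+\norm{x-y}^2\right)-1\geq\norm{x}^2-1$. It then remains to observe that $\norm{x}^2-1\geq(\norm{x}-1)^2$, which rearranges to $2\norm{x}\geq 2$, i.e., precisely the hypothesis $\norm{x}\geq 1$; this same hypothesis also guarantees $\norm{x}-1\geq 0$, so that taking square roots of both sides of $\norm{x}^2-1\geq(\norm{x}-1)^2$ is legitimate and produces $p_y(x)\geq\norm{x}-1$.

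The argument is otherwise entirely routine; the only place that requires any attention is the final elementary inequality $\norm{x}^2-1\geq(\norm{x}-1)^2$, which is exactly what forces the restriction $\norm{x}\geq 1$ in the statement. I do not expect any genuine obstacle here.
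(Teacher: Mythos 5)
Your proof is correct and follows essentially the same route as the paper: the lower bound via $2\norm{x}\leq\norm{x+y}+\norm{x-y}$ combined with $a^2+b^2\geq\frac{1}{2}(a+b)^2$, and the upper bound via the triangle inequality $\norm{x\pm y}\leq\norm{x}+1$. Your explicit identification of where $\norm{x}\geq 1$ is needed (the step $\norm{x}^2-1\geq(\norm{x}-1)^2$) matches the paper's chain exactly, just written without carrying the square roots through every line.
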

\begin{proof}
To show that $p_y(x)\geq\norm{x}-1$, note that
\begin{eqnarray*}
p_y(x) &=& \sqrt{\frac{1}{2}\left(\norm{x+y}^2+\norm{x-y}^2\right)-1} \\
&\geq& \sqrt{\frac{1}{4}\left(\norm{x+y}+\norm{x-y}\right)^2-1} \\
&\geq& \sqrt{\frac{1}{4}\norm{2x}^2-1} \\
&\geq& \sqrt{\norm{x}^2-1} \\
&\geq& \sqrt{\norm{x}^2-2\norm{x}+1} \\
&=& \norm{x}-1.
\end{eqnarray*}
To prove the other inequality, note that
\begin{eqnarray*}
p_y(x) &=& \sqrt{\frac{1}{2}\left(\norm{x+y}^2+\norm{x-y}^2\right)-1} \\
&\leq& \sqrt{\norm{x}^2+2\norm{x}\norm{y}+\norm{y}^2-1} \\
&=& \sqrt{\norm{x}^2+2\norm{x}} \\
&<& \sqrt{\norm{x}^2+2\norm{x}+1} \\
&=& \norm{x}+1.
\end{eqnarray*}
\end{proof}

\

Now we may give the main Theorem, on the characterization of parallelogram rule in terms of $p_y(x)$, for all $y\in X$.

\

\begin{thm}
Let $X$ be a normed linear space, and for each $y\in X$, $p_y(x)$ is defined as above. Then the following statements are equivalent.
\begin{enumerate}
\item[(i)] For all $y\in X$, $p_y$ is homogeneous.
\item[(ii)] For all $y\in X$, $p_y(x)=\norm{x}$.
\item[(iii)] $\norm{\cdot}$ is induced from an inner product on $X$.
\end{enumerate}
\end{thm}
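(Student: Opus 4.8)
The plan is to prove $(ii)\Rightarrow(i)$, then $(i)\Rightarrow(ii)$, then $(ii)\Rightarrow(iii)$, and finally $(iii)\Rightarrow(ii)$, which together yield all three equivalences. Two of these are formal. For $(ii)\Rightarrow(i)$: if $p_y(x)=\norm{x}$ for every $x$, then for $\lambda\in K$ we get $p_y(\lambda x)=\norm{\lambda x}=\abs{\lambda}\,\norm{x}=\abs{\lambda}\,p_y(x)$, so $p_y$ is homogeneous. For $(iii)\Rightarrow(ii)$: if the norm comes from an inner product, the parallelogram law $\norm{x+y}^2+\norm{x-y}^2=2\norm{x}^2+2\norm{y}^2$ gives, for $\norm{y}=1$, that $\frac{1}{2}\left(\norm{x+y}^2+\norm{x-y}^2\right)-1=\norm{x}^2$, hence $p_y(x)=\norm{x}$.

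The substantive implication is $(i)\Rightarrow(ii)$, and the key tool is the Corollary. Fix a unit vector $y$. Both $\norm{\cdot}$ and, by hypothesis $(i)$, $p_y$ are homogeneous, so by the Corollary it suffices to show that $p_y-\norm{\cdot}$ is bounded on $X$. For $\norm{x}\geq 1$ this is exactly the preceding Lemma, which gives $\abs{p_y(x)-\norm{x}}\leq 1$. For $\norm{x}\leq 1$ I would rerun the upper-bound computation in that Lemma's proof — it never used $\norm{x}\geq 1$ — to obtain $p_y(x)\leq\sqrt{\norm{x}^2+2\norm{x}}\leq\sqrt{3}$, so that $\abs{p_y(x)-\norm{x}}\leq\sqrt{3}+1$ on the unit ball as well. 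Hence $p_y-\norm{\cdot}$ is globally bounded, the Corollary applies, and $p_y\equiv\norm{\cdot}$; since $y$ was an arbitrary unit vector, $(ii)$ holds.

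Finally, for $(ii)\Rightarrow(iii)$, I would square the identity $p_y(x)=\norm{x}$ to get
\[
\norm{x+y}^2+\norm{x-y}^2=2\norm{x}^2+2=2\norm{x}^2+2\norm{y}^2
\]
for all $x\in X$ and all $y$ with $\norm{y}=1$. To remove the normalization, apply this with $x$ replaced by $x/\norm{y}$ and $y$ by $y/\norm{y}$ (for $y\neq 0$; the case $y=0$ is trivial) and multiply through by $\norm{y}^2$; this yields the parallelogram law for arbitrary $x,y\in X$. By the Jordan--von Neumann theorem the norm is then induced by an inner product, giving $(iii)$.

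The only point requiring care is the one flagged above: the Corollary demands that $p_y-\norm{\cdot}$ be bounded on \emph{all} of $X$, whereas the Lemma only controls the region $\set{\norm{x}\geq 1}$; in infinite dimensions one cannot fall back on compactness of the unit ball, so the explicit estimate $p_y(x)\leq\sqrt{\norm{x}^2+2\norm{x}}$ on $\set{\norm{x}\leq 1}$ is genuinely needed. Apart from that, the proof is a string of elementary manipulations together with the classical parallelogram-law characterization of inner-product norms.
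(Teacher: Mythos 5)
Your proposal is correct and follows essentially the same route as the paper: the substantive implication (i)$\Rightarrow$(ii) is handled by combining the homogeneity of $p_y$ and $\norm{\cdot}$ with the comparison Lemma and the Corollary, and the remaining implications reduce to the parallelogram law (plus Jordan--von Neumann). Your explicit estimate $p_y(x)\leq\sqrt{\norm{x}^2+2\norm{x}}\leq\sqrt{3}$ on the unit ball fills a small gap the paper glosses over --- its Lemma only controls $p_y(x)-\norm{x}$ on $\set{\norm{x}\geq 1}$, yet the Corollary needs boundedness on all of $X$ --- and your detour (iii)$\Rightarrow$(ii)$\Rightarrow$(i) also sidesteps the paper's garbled claim that $p_y(x)=\langle x,x\rangle$ (which should read $p_y(x)=\sqrt{\langle x,x\rangle}=\norm{x}$).
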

\begin{proof}
(i)$\Rightarrow$(ii). Note that both $p_y$ and the norm $\norm{\cdot}$ are homogeneous functions on $X$. The above lemma shows that $p_y(x)-\norm{x}$is bounded on $X$. Hence $p_y(x)=\norm{x}$ for all $x\in X$. \\
(ii)$\Rightarrow$(iii). Note that $p_y(x)=\norm{x}$ gives that for any $y\in X$ with $\norm{y}=1$, $2\norm{x}^2+2\norm{y}^2=\norm{x+y}^2+\norm{x-y}^2$. In general, if $y\neq0$, then this also implies the parallelogram law, by taking $\tilde y=\frac{y}{\norm{y}}$. \\
(iii)$\Rightarrow$(i). If $\norm{\cdot}$ is induced from an inner product $\langle\cdot,\cdot\rangle:X\times X\to\C$, then we may prove directly that $p_y(x)=\langle x,x\rangle$, which is thus homogeneous.
\end{proof}

\bibliographystyle{amsplain}

\begin{thebibliography}{9}
\bibitem[1]{EFA} Barbara D. MacCleur. \emph{Elementary Functional Analysis}. Springer, New York, London, 2008.
\end{thebibliography}

\end{document}